\title{Bounds on Mean Cycle Time\\ in Acyclic Fork-Join Queueing Networks\thanks
{Proceedings of the 4th International Workshop on Discrete Event Systems (WODES'98), University of Cagliari, Cagliari, Sardinia, Italy, August 26-28, 1998, London, IEE, 1998, pp.~469--474.}} 
\author{Nikolai K. Krivulin\thanks{Faculty of Mathematics and Mechanics, St.~Petersburg State University, 28 Universitetsky Ave., St.~Petersburg, 198504, Russia, 
nkk@math.spbu.ru}
}
\date{}
\newtheorem{theorem}{Theorem}
\newtheorem{lemma}[theorem]{Lemma}
\newtheorem{proposition}{Proposition}
\def\sumo_#1^#2{\setbox0=\hbox{$\displaystyle{\sum}$}
                \setbox1=\hbox{$\scriptstyle{#1}$}
                \setbox2=\hbox{$\scriptstyle{#2}$}
		\setbox3=\hbox{${}_{{}_\oplus}\mathsurround=0pt$}
		\dimen1=.5\wd1 \advance\dimen1 by-.5\wd0
		\ifdim\dimen1>0pt
		   \ifdim\dimen1>\wd3 \kern\wd3 \else\kern\dimen1\fi\fi
		\dimen2=.5\wd2 \advance\dimen2 by-.5\wd0
		\ifdim\dimen2>0pt
		   \ifdim\dimen2>\wd3 \kern\wd3 \else\kern\dimen2\fi\fi
		\mathop{{\sum}{}_{{}_\oplus}}_{\kern-\wd3 #1}^{\kern-\wd3 #2}}
\def\sumol_#1{\setbox0=\hbox{$\displaystyle{\sum}$}
             \setbox1=\hbox{$\scriptstyle{#1}$}
	     \setbox3=\hbox{${}_{{}_\oplus}\mathsurround=0pt$}
	     \dimen1=.5\wd1 \advance\dimen1 by-.5\wd0
	     \ifdim\dimen1>0pt
	        \ifdim\dimen1>\wd3 \kern\wd3 \else\kern\dimen1\fi\fi
	     \mathop{{\sum}{}_{{}_\oplus}}_{\kern-\wd3 #1}}
\def\prodo_#1^#2{\setbox0=\hbox{$\displaystyle{\prod}$}
                \setbox1=\hbox{$\scriptstyle{#1}$}
                \setbox2=\hbox{$\scriptstyle{#2}$}
		\setbox3=\hbox{${}_{{}_\otimes}\mathsurround=0pt$}
		\dimen1=.5\wd1 \advance\dimen1 by-.5\wd0
		\ifdim\dimen1>0pt
		   \ifdim\dimen1>\wd3 \kern\wd3 \else\kern\dimen1\fi\fi
		\dimen2=.5\wd2 \advance\dimen2 by-.5\wd0
		\ifdim\dimen2>0pt
		   \ifdim\dimen2>\wd3 \kern\wd3 \else\kern\dimen2\fi\fi
	      \mathop{{\prod}{}_{{}_\otimes}}_{\kern-\wd3 #1}^{\kern-\wd3 #2}}
\def\fraco#1#2{\setbox1=\hbox{$\displaystyle{#1}$}
               \setbox2=\hbox{$\displaystyle{#2}$}
               \dimen0=\wd1
               \ifdim\wd2>\dimen0 \dimen0=\wd2\fi
               \advance \dimen0 by .4em
               \setbox3=\vbox{\hrule width\dimen0
                  \vskip1.3pt \hrule width\dimen0}
             \mathop{\raise1.6pt \box3}_{\raise1.6pt\box2}^{\raise1.6pt\box1}}
\newcommand{\normo}[1]{\left\|#1\right\|_{\scriptscriptstyle\oplus}}
\begin{document}

\maketitle

\begin{abstract}
Simple lower and upper bounds on mean cycle time in stochastic acyclic
fork-join networks are derived using the $(\max,+)$-algebra approach. The
behaviour of the bounds under various assumptions concerning the service times
in the networks is discussed, and related numerical examples are presented.
\\

\textit{Key-Words:} max-plus algebra, dynamic state equation, acyclic fork-join queueing networks,
Mean cycle time.
\end{abstract}

\section{Introduction}
One of the problems of interest in the analysis of stochastic queueing
networks is to evaluate the mean cycle time of a network. Both the mean cycle
time and its inverse which can be regarded as a throughput, present
performance measures commonly used to describe efficiency of the network
operation.

It is frequently rather difficult to evaluate the mean cycle time exactly,
even though the network under study is quite simple. To get information about
the performance measure in this case, one can apply computer simulation to
produce reasonable estimates. Another approach is to derive bounds on the mean
cycle time (see examples in \cite{Bacc91,Bacc93,Glas95}).

The paper is concerned with the derivation of a lower and upper bounds on the
mean cycle time for stochastic acyclic fork-join queueing networks
\cite{Bacc89,Bacc93}. A useful way to represent dynamics of the networks is
based on the $(\max,+)$-algebra approach \cite{Cuni79,Bacc93,Masl94}. We
apply the $(\max,+)$-algebra dynamic representation proposed in
\cite{Kriv96a,Kriv96b} to get algebraic bounds and then exploit them to derive
bounds in the stochastic case.

Compared to the techniques proposed in \cite{Bacc91,Glas95}, which mainly rely
on results of the theory of large deviations as well as the Perron-Frobenius
spectral theory, our approach is essentially based on pure algebraic
manipulations combined with application of bounds on extreme values, obtained
in \cite{Gumb54,Hart54}.

\section{Algebraic Definitions and Results}\label{S-ADR}
The $(\max,+)$-algebra is an idempotent commutative semiring (idempotent
semifield) which is defined as
$ {\mathbb R}_{\rm max}=(\underline{{\mathbb R}},\oplus,\otimes) $ with
$ \underline{{\mathbb R}} = {\mathbb R} \cup \{\varepsilon\} $,
$ \varepsilon = -\infty $, and binary operations $ \oplus $ and
$ \otimes $ defined as
$$
x \oplus y = \max(x,y), \quad x \otimes y = x + y \quad
\forall x,y\in\underline{\mathbb R}.
$$

As it is easy to see, the operations $ \oplus $ and $ \otimes $
retain most of the properties of the ordinary addition and multiplication,
including associativity, commutativity, and distributivity of multiplication
over addition. However, the operation $ \oplus $ is idempotent; that
is, for any $ x \in \underline{{\mathbb R}} $, one has
$ x \oplus x = x $.

There are the null and identity elements, namely $ \varepsilon $ and
$ 0 $, to satisfy the conditions
$ x \oplus \varepsilon = \varepsilon \oplus x = x $, and
$ x \otimes 0 = 0 \otimes x = x $, for any
$ x \in \underline{{\mathbb R}} $. The null element $ \varepsilon $ and
the operation $ \otimes $ are related by the usual absorption rule
involving $ x \otimes \varepsilon = \varepsilon \otimes x = \varepsilon $.

Non-negative power of any $ x\in{\mathbb R} $ is defined as
$$
x^{\otimes 0}=0, \quad
x^{\otimes q}
=\underbrace{x\otimes\cdots\otimes x}_{q\;\;{\rm times}}
$$
for any integer $ q\geq1 $. Clearly, the $(\max,+)$-algebra power
$ x^{\otimes q} $ corresponds to $ qx $ in ordinary notations.

\subsection{Algebra of Matrices}
The $(\max,+)$-algebra of matrices is readily introduced in the regular way.
Specifically, for any $(n \times n)$-matrices $ X = (x_{ij}) $ and
$ Y = (y_{ij}) $, the entries of $ U = X \oplus Y $ and
$ V = X \otimes Y $ are calculated as
$$
u_{ij} = x_{ij} \oplus y_{ij}, \quad \mbox{and} \quad
v_{ij} = \sumo_{k=1}^{n} x_{ik} \otimes y_{kj},
$$
where $ \sum_{\scriptscriptstyle\oplus} $ stands for the iterated
operation $ \oplus $. As the null element, the matrix
$ \mathcal{E} $ with all entries equal to $ \varepsilon $ is taken in the
algebra, whereas the diagonal matrix $ E = \mathop\mathrm{diag}(0,\ldots,0) $ with the
off-diagonal entries set to $ \varepsilon $ presents the identity.

For any square matrix $ X\ne\mathcal{E} $, one can define
$$
X^{\otimes 0} = E, \quad
X^{\otimes q}
=\underbrace{X\otimes\cdots\otimes X}_{q\;\;{\rm times}}
$$
for any integer $ q\geq 1 $. However, idempotency in this algebra leads, in
particular, to the matrix identity
$$
(E \oplus X)^{\otimes q} = E \oplus X \oplus \cdots \oplus X^{\otimes q}.
$$

For any matrix $ X $, its norm is defined as
$$
\normo{X}=\sumol_{i,j}x_{ij}=\max_{i,j}x_{ij}.
$$
The matrix norm possesses the usual properties. Specifically, for any matrix
$ X $, it holds $ \normo{X}\geq\varepsilon $, and
$ \normo{X}=\varepsilon $ if and only if $ X=\mathcal{E} $. Furthermore,
$ \normo{c\otimes X}=c\otimes\normo{X} $ for any
$ c\in\underline{\mathbb R} $, and
\begin{eqnarray*}
\normo{X\oplus Y}  & =  &\normo{X}\oplus\normo{Y}, \\
\normo{X\otimes Y} &\leq& \normo{X}\otimes\normo{Y}
\end{eqnarray*}
for any two conforming matrices $ X $ and $ Y $. Note that for any
$ c>0 $, we also have $ \normo{cX}=c\normo{X} $.

Consider an $(n \times n)$-matrix $ X $ with its entries
$ x_{ij} \in \underline{{\mathbb R}} $. It can be treated as an adjacency
matrix of an oriented graph with $ n $ vertices, provided each entry
$ x_{ij} \neq \varepsilon $ implies the existence of the edge
$ (i,j) $ in the graph, while $ x_{ij} = \varepsilon $ does the lack
of the edge.

It is easy to verify that for any integer $ q\geq 1 $, the matrix
$ X^{\otimes q} $ has its the entry $ x^{(q)}_{ij}\ne\varepsilon $ if
and only if there exists a path from vertex $ i $ to vertex $ j $ in
the graph, which consists of $ q $ edges. Furthermore, if the graph
associated with the matrix $ X $ is acyclic, we have
$ X^{\otimes q}=\mathcal{E} $ for all $ q>p $, where $ p $ is the
length of the longest path in the graph. Otherwise, provided that the graph is
not acyclic, one can construct a path of any length, lying along circuits, and
then it holds that $ X^{\otimes q}\neq\mathcal{E} $ for all $ q \geq 0 $.

\subsection{Further Algebraic Results}
For any graph, its adjacency matrix $ G $ with the elements equal either
to $ 0 $ or $ \varepsilon $ is said to be standard. It is easy to
verify the next statement.
\begin{proposition}\label{P-XXG}
For any matrix $ X $, it holds
$$
X \le \normo{X}\otimes G,
$$
where $ G $ is the standard adjacency matrix of the graph associated with
$ X $.
\end{proposition}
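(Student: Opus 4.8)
The plan is to verify the matrix inequality entrywise, since in this $(\max,+)$ setting an inequality $A \le B$ between matrices simply means $a_{ij} \le b_{ij}$ for all $i,j$, and all the relevant order structure comes from the usual order on $\underline{\mathbb R}$. Fix an arbitrary pair of indices $i,j$. I would split into two cases according to whether the edge $(i,j)$ is present in the graph associated with $X$, i.e.\ whether $x_{ij} \ne \varepsilon$ or $x_{ij} = \varepsilon$.

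In the first case, $x_{ij} \ne \varepsilon$, so by definition of the standard adjacency matrix $G$ we have $g_{ij} = 0$, hence $(\normo{X} \otimes G)_{ij} = \normo{X} \otimes 0 = \normo{X}$. It then remains to observe that $x_{ij} \le \normo{X}$, which is immediate from the definition $\normo{X} = \max_{k,l} x_{kl}$. In the second case, $x_{ij} = \varepsilon$, so $g_{ij} = \varepsilon$ and therefore $(\normo{X} \otimes G)_{ij} = \normo{X} \otimes \varepsilon = \varepsilon$ by the absorption rule; since $x_{ij} = \varepsilon$ as well, the inequality $x_{ij} \le (\normo{X}\otimes G)_{ij}$ holds with equality. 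Combining the two cases gives $x_{ij} \le (\normo{X}\otimes G)_{ij}$ for every $i,j$, which is exactly the claimed inequality $X \le \normo{X}\otimes G$.

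There is essentially no hard step here: the only mild subtlety is bookkeeping at the null element $\varepsilon = -\infty$, where one must use the absorption rule $\normo{X}\otimes\varepsilon = \varepsilon$ rather than treating $\otimes$ as ordinary addition (which would give the spurious value $-\infty$ again, but for the wrong formal reason if $\normo{X} = \varepsilon$, i.e.\ if $X = \mathcal{E}$). Even in that degenerate case $X = \mathcal{E}$ the argument goes through, since then $G = \mathcal{E}$ as well and both sides equal $\mathcal{E}$. I would write the proof in one or two sentences, noting that it suffices to check the inequality for each entry and then dispatching the two cases as above.
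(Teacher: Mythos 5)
Your proof is correct and is exactly the entrywise verification the paper has in mind when it says the statement "is easy to verify" (the paper supplies no proof of its own). The two-case split on whether $x_{ij}=\varepsilon$, together with the absorption rule at the null element, is the natural and complete argument.
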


In particular, for a matrix $ D=\mathop\mathrm{diag}(d_{1},\ldots,d_{n}) $, we have
$$
D \le \normo{D}\otimes E = \left(\sumo_{i=1}^{n}d_{i}\right) \otimes E.
$$

\begin{proposition}\label{P-AAE}
Let matrices $ X_{1},\ldots,X_{k} $ have a common associated acyclic
graph, $ p $ be the length of the longest path in the graph, and
$$
X = X_{1}^{\otimes m_{1}}\otimes\cdots\otimes X_{k}^{\otimes m_{k}},
$$
where $ m_{1},\ldots,m_{k} $ are nonnegative integers.

If it holds that $ m_{1}+\cdots+m_{k}>p $, then $ X=\mathcal{E} $.
\end{proposition}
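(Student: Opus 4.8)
The plan is to reduce the product $X$ to a scalar multiple of a single power of the standard adjacency matrix of the common graph, and then invoke acyclicity. First I would put $m = m_1 + \cdots + m_k$ and observe that any factor $X_i^{\otimes m_i}$ with $m_i = 0$ equals the identity $E$, so it can be deleted from the product without changing $X$ and without changing the value of $m$; hence I may assume $m_i \ge 1$ for every $i$.

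Next I would apply Proposition~\ref{P-XXG} to each $X_i$ to get $X_i \le \normo{X_i}\otimes G$, where $G$ is the standard adjacency matrix of the graph associated with $X_i$ — and this $G$ is the same for all $i$ since the associated graph is common. Using that multiplication of matrices in ${\mathbb R}_{\rm max}$ is isotone in each argument, that $(\max,+)$-powers preserve inequalities, and that a scalar factor commutes with matrix multiplication, I would chain these bounds into
$$
X = X_1^{\otimes m_1}\otimes\cdots\otimes X_k^{\otimes m_k}
\le \left(\normo{X_1}^{\otimes m_1}\otimes\cdots\otimes\normo{X_k}^{\otimes m_k}\right)\otimes G^{\otimes m}
= c\otimes G^{\otimes m},
$$
where $c = \normo{X_1}^{\otimes m_1}\otimes\cdots\otimes\normo{X_k}^{\otimes m_k}$ is a scalar and the $G$-factors have been merged via $G^{\otimes m_1}\otimes\cdots\otimes G^{\otimes m_k} = G^{\otimes m}$.

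Finally, since the graph associated with $G$ is acyclic with longest path of length $p$ and $m > p$, the path-counting characterization of $X^{\otimes q}$ recalled in Section~\ref{S-ADR} yields $G^{\otimes m} = \mathcal{E}$. Then $X \le c\otimes\mathcal{E} = \mathcal{E}$ by the absorption rule, and since every entry of $X$ is at least $\varepsilon$, we conclude $X = \mathcal{E}$.

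I do not expect a genuine obstacle here: the statement is essentially a bookkeeping corollary of Proposition~\ref{P-XXG} together with the acyclicity fact. The only points needing a little care are the correct handling of the $m_i = 0$ factors and the collection of the scalar norms when they are pulled past the matrix factors. A purely combinatorial alternative would work equally well — a nonzero entry of $X$ would force a walk consisting of exactly $m$ edges in the common acyclic graph, and no such walk exists once $m > p$ — so the result is robust to the choice of argument.
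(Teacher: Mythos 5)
Your proof is correct and follows essentially the same route as the paper: bound each factor by $\normo{X_{i}}\otimes G$ via Proposition~\ref{P-XXG}, collect the scalars, merge the powers of $G$ into $G^{\otimes m}$, and invoke $G^{\otimes m}=\mathcal{E}$ for $m>p$ by acyclicity. Your explicit treatment of the $m_{i}=0$ factors and the remark on the combinatorial alternative are minor additions to the paper's argument, not deviations from it.
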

\begin{proof}
$$
X
=
X_{1}^{\otimes m_{1}}\otimes\cdots\otimes X_{k}^{\otimes m_{k}} \\
\le
\normo{X_{1}}^{\otimes m_{1}}\otimes\cdots\otimes
              \normo{X_{k}}^{\otimes m_{k}}\otimes G^{\otimes m},
$$
where $ m=m_{1}+\cdots+m_{k} $.

Since the associated graph is acyclic, it holds that
$ G^{\otimes q}=\mathcal{E} $ for all $ q>p $. Therefore,
if $ m>p $, then $ G^{\otimes m}=\mathcal{E} $, and we arrive at the
inequality $ X \le \mathcal{E} $ which leads us to the desired result.
\end{proof}

\begin{lemma}\label{L-EAA}
Let matrices $ X_{1},\ldots,X_{k} $ have a common associated acyclic
graph, and $ p $ be the length of the longest path in the graph.

If $ \normo{X_{i}}\ge 0 $ for all $ i=1,\ldots,k $, then for any
nonnegative integers $ m_{1},\ldots,m_{k} $, it holds
$$
\normo{\prodo_{i=1}^{k}(E\oplus X_{i})^{\otimes m_{i}}}
\leq
\left(\sumo_{i=1}^{k}\normo{X_{i}}\right)^{\otimes p},
$$
where $ \prod_{\scriptscriptstyle\otimes} $ denotes the iterated operation
$ \otimes $.
\end{lemma}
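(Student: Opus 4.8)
The plan is to expand the product into a $(\max,+)$-sum of ordered monomials in $X_{1},\ldots,X_{k}$, discard those that vanish by Proposition~\ref{P-AAE}, and bound each of the finitely many survivors by the claimed right-hand side.

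First I would apply the idempotency identity $(E\oplus X_{i})^{\otimes m_{i}}=E\oplus X_{i}\oplus\cdots\oplus X_{i}^{\otimes m_{i}}$ from Section~\ref{S-ADR} to every factor and then distribute $\otimes$ over $\oplus$. Since multiplication in the $(\max,+)$-algebra of matrices is not commutative, some care is needed here: the resulting monomials inherit the left-to-right order of the product, so each one has exactly the shape $X_{1}^{\otimes j_{1}}\otimes\cdots\otimes X_{k}^{\otimes j_{k}}$ with $0\le j_{i}\le m_{i}$ treated in Proposition~\ref{P-AAE}. By that proposition, every monomial with $j_{1}+\cdots+j_{k}>p$ equals $\mathcal{E}$ and may be dropped, leaving a finite $(\max,+)$-sum indexed by $\{(j_{1},\ldots,j_{k}):0\le j_{i}\le m_{i},\ j_{1}+\cdots+j_{k}\le p\}$.

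Then I would pass to norms. Using $\normo{X\oplus Y}=\normo{X}\oplus\normo{Y}$, the submultiplicativity $\normo{X\otimes Y}\le\normo{X}\otimes\normo{Y}$ applied repeatedly, and $\normo{X_{i}^{\otimes 0}}=\normo{E}=0=\normo{X_{i}}^{\otimes 0}$ for the degenerate exponents, one gets
\[
\normo{\prodo_{i=1}^{k}(E\oplus X_{i})^{\otimes m_{i}}}
\le
\sumol_{\substack{0\le j_{i}\le m_{i}\\ j_{1}+\cdots+j_{k}\le p}}
\normo{X_{1}}^{\otimes j_{1}}\otimes\cdots\otimes\normo{X_{k}}^{\otimes j_{k}}.
\]
Now each term on the right, read in ordinary arithmetic as $j_{1}\normo{X_{1}}+\cdots+j_{k}\normo{X_{k}}$, is at most $(j_{1}+\cdots+j_{k})\max_{l}\normo{X_{l}}\le p\max_{l}\normo{X_{l}}=\bigl(\sumo_{l=1}^{k}\normo{X_{l}}\bigr)^{\otimes p}$, where the first inequality uses $\normo{X_{i}}\ge 0$ for all $i$ and the second uses both $j_{1}+\cdots+j_{k}\le p$ and $\max_{l}\normo{X_{l}}\ge 0$. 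Since $\oplus$ is idempotent, a $(\max,+)$-sum of terms all bounded by one common value is itself bounded by that value, which is the assertion. The hypothesis $\normo{X_{i}}\ge 0$ is essential precisely for this last estimate, and the only real obstacle is the non-commutative bookkeeping of the expansion; once the monomials are recognized to be of the Proposition~\ref{P-AAE} form, the rest reduces to routine norm inequalities.
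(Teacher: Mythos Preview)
Your argument is correct and follows essentially the same route as the paper: expand via the idempotency identity, distribute, kill the long monomials with Proposition~\ref{P-AAE}, pass to norms, and bound each surviving term by $\bigl(\sumo_{l=1}^{k}\normo{X_{l}}\bigr)^{\otimes p}$ using $\normo{X_{i}}\ge 0$. Your explicit attention to the ordering of factors in the non-commutative expansion is a welcome clarification over the paper's somewhat terse presentation.
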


\begin{proof}
Consider the matrix
$$
X = \prodo_{i=1}^{k}(E\oplus X_{i})^{\otimes m_{i}},
$$
and represent it in the form
\begin{eqnarray*}
X & = &
(E\oplus X_{1})^{\otimes m_{1}}
\otimes\cdots\otimes
(E\oplus X_{k})^{\otimes m_{k}} \\
  & = &
\sumo_{i_{1}=0}^{m_{1}}X_{1}^{\otimes i_{1}}\otimes\cdots\otimes
\sumo_{i_{k}=0}^{m_{k}}X_{k}^{\otimes i_{k}} \\
  & = &
\sumo_{i_{1}=0}^{m_{1}}\cdots\sumo_{i_{k}=0}^{m_{k}}
X_{1}^{\otimes i_{1}}\otimes\cdots\otimes X_{k}^{\otimes i_{k}} \\
  & \le &
\sumol_{0\leq i_{1}+\cdots+i_{k}\leq m}
X_{1}^{\otimes i_{1}}\otimes\cdots\otimes X_{k}^{\otimes i_{k}},
\end{eqnarray*}
where $ m=m_{1}+\cdots+m_{k} $. From Proposition~\ref{P-AAE} we may replace
$ m $ with $ p $ in the last term to get
\begin{eqnarray*}
X & \le & \sumol_{0\leq i_{1}+\cdots+i_{k}\leq p}
X_{1}^{\otimes i_{1}}\otimes\cdots\otimes X_{k}^{\otimes i_{k}}.
\end{eqnarray*}

Proceeding to the norm, with its additive and multiplicative properties, we
arrive at the inequality
$$
\normo{X}
\leq
\sumol_{0\leq i_{1}+\cdots+i_{k}\leq p}
\normo{X_{1}}^{\otimes i_{1}}\otimes\cdots\otimes
\normo{X_{k}}^{\otimes i_{k}}.
$$
Since
$ 0\leq\normo{X_{i}}\leq\normo{X_{1}}\oplus\cdots\oplus\normo{X_{k}} $
for all $ i=1,\ldots,k $, we finally have
\begin{eqnarray*}
\hskip 8.5mm
\normo{X}
& \leq &
\sumo_{i=0}^{p}(\normo{X_{1}}\oplus\cdots\oplus\normo{X_{k}})^{\otimes p} \\
& = &
(\normo{X_{1}}\oplus\cdots\oplus\normo{X_{k}})^{\otimes p}.
\qedhere
\end{eqnarray*}
\end{proof}

\section{Elements of Probability}\label{S-EOP}
In this section we present some probabilistic results associated with
$(\max,+)$-algebra concepts introduced above. We start with a lemma
which states properties of the expectation with respect to the operations
$ \oplus $ and $ \otimes $.

\begin{lemma}\label{L-POE}
Let $ \xi_{1},\ldots,\xi_{k} $ be random variables taking their values in
$ \underline{\mathbb R} $, and such that their expected values
$ {\mathbb E}[\xi_{i}] $, $ i=1,\ldots,k $, exist. Then it holds
\begin{enumerate}
\item $ {\mathbb E}[\xi_{1}\oplus\cdots\oplus\xi_{k}]
\geq{\mathbb E}[\xi_{1}]\oplus\cdots\oplus{\mathbb E}[\xi_{k}] $,
\item $ {\mathbb E}[\xi_{1}\otimes\cdots\otimes\xi_{k}]
=   {\mathbb E}[\xi_{1}]\otimes\cdots\otimes{\mathbb E}[\xi_{k}] $.
\end{enumerate}
\end{lemma}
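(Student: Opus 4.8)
The plan is to prove each of the two claims by translating the $(\max,+)$ statements into ordinary arithmetic, where $\oplus$ becomes $\max$ and $\otimes$ becomes $+$, and then invoking elementary facts about expectation: monotonicity for part~1 and linearity for part~2. Throughout I keep in mind that these random variables take values in $\underline{\mathbb R} = {\mathbb R}\cup\{\varepsilon\}$, so a preliminary remark is needed on how ${\mathbb E}$ is to be understood when the value $\varepsilon=-\infty$ is attained with positive probability; I will assume, consistent with the hypothesis that the $ {\mathbb E}[\xi_i] $ exist, that either $\xi_i>\varepsilon$ almost surely or the convention ${\mathbb E}[\xi_i]=\varepsilon$ is in force, and that the latter case is handled by the absorption rule. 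For the main argument I may restrict to the finite case.

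For part~1, the key step is the pointwise inequality
\begin{equation*}
\xi_{1}\oplus\cdots\oplus\xi_{k}
= \max(\xi_{1},\ldots,\xi_{k})
\geq \xi_{j}
\qquad\text{for each } j=1,\ldots,k,
\end{equation*}
which holds surely. Taking expectations and using monotonicity of ${\mathbb E}$ gives ${\mathbb E}[\xi_{1}\oplus\cdots\oplus\xi_{k}]\geq{\mathbb E}[\xi_{j}]$ for every $j$; since this holds for all $j$, the left-hand side is an upper bound for the set $\{{\mathbb E}[\xi_{1}],\ldots,{\mathbb E}[\xi_{k}]\}$ and hence dominates its maximum, i.e.
$$
{\mathbb E}[\xi_{1}\oplus\cdots\oplus\xi_{k}]
\geq {\mathbb E}[\xi_{1}]\oplus\cdots\oplus{\mathbb E}[\xi_{k}].
$$
This is just the familiar ``expectation of a maximum is at least the maximum of the expectations.''

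For part~2, I would first recall that $\xi_{1}\otimes\cdots\otimes\xi_{k}$ is by definition $\xi_{1}+\cdots+\xi_{k}$ in ordinary notation, so the claim is exactly additivity of expectation: ${\mathbb E}[\xi_{1}+\cdots+\xi_{k}]={\mathbb E}[\xi_{1}]+\cdots+{\mathbb E}[\xi_{k}]$, which holds whenever the individual expectations exist (note that no independence is required). Rewriting the right-hand side back in $(\max,+)$ notation as ${\mathbb E}[\xi_{1}]\otimes\cdots\otimes{\mathbb E}[\xi_{k}]$ completes the proof. I expect the only genuinely delicate point to be the bookkeeping around the value $\varepsilon$: if some $\xi_i=\varepsilon$ on a set of positive probability then the product is $\varepsilon$ there, and one must check the identity still reads correctly under the stated existence hypotheses; away from that boundary case both parts are immediate consequences of monotonicity and linearity of the ordinary expectation, so no hard estimate is involved.
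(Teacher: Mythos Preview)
Your argument is correct. For part~1 you use the pointwise bound $\xi_{1}\oplus\cdots\oplus\xi_{k}\ge\xi_{j}$ and monotonicity of expectation, then take the maximum over $j$; for part~2 you invoke linearity. The paper proceeds a little differently: it reduces to $k=2$, disposes of the case where one expectation equals $\varepsilon$ exactly as you suggest, and in the finite case appeals to the identity $x\oplus y=\tfrac12(x+y+|x-y|)$ together with ${\mathbb E}|x-y|\ge|{\mathbb E}x-{\mathbb E}y|$. Your route is the more elementary one, since it avoids the detour through the absolute-value identity and handles general $k$ in one stroke; the paper's version, on the other hand, makes the $\varepsilon$ bookkeeping explicit rather than leaving it as a remark. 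Both arguments are essentially one-liners, and neither offers a quantitative advantage over the other.
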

\begin{proof}
Clearly, it will suffice to prove the lemma only for $ k=2 $, and then
extend the proof by induction to the case of arbitrary $ k $.

To verify the first inequality, first assume that one of the expectation on
its right side, say $ {\mathbb E}[\xi_{2}] $, is infinite; that is
$ {\mathbb E}[\xi_{2}]=\varepsilon $. Since it holds that
$ \xi_{1}\oplus\xi_{2}\geq\xi_{1} $, we have
$$
{\mathbb E}[\xi_{1}\oplus\xi_{2}]
\geq {\mathbb E}[\xi_{1}]
=    {\mathbb E}[\xi_{1}]\oplus{\mathbb E}[\xi_{2}].
$$

Suppose now that
$ {\mathbb E}[\xi_{1}], {\mathbb E}[\xi_{2}] > \varepsilon $, and consider
the obvious identity
$$
x\oplus y = \frac{1}{2}(x+y+|x-y|) \quad \forall x,y \in {\mathbb R}.
$$
With ordinary properties of expectation, we get
$$
{\mathbb E}[\xi_{1}\oplus\xi_{2}]
\geq
\frac{1}{2}({\mathbb E}[\xi_{1}]+{\mathbb E}[\xi_{2}]
 + |{\mathbb E}[\xi_{1}]-{\mathbb E}[\xi_{2}]|) \\
=
{\mathbb E}[\xi_{1}]\oplus{\mathbb E}[\xi_{2}].
$$

The second assertion of the lemma is trivial.
\end{proof}

The next result \cite{Gumb54,Hart54} provides an upper bound for the expected
value of the maximum of independent and identically distributed (i.i.d.)
random variables.
\begin{lemma}\label{L-EED}
Let $ \xi_{1},\ldots,\xi_{k} $ be i.i.d. random variables with
$ {\mathbb E}[\xi_{1}]<\infty $ and
$ {\mathbb D}[\xi_{1}]<\infty $. Then it holds
$$
{\mathbb E}\left[\sumo_{i=1}^{k}\xi_{i}\right]
\le
{\mathbb E}[\xi_{1}]+\frac{k-1}{\sqrt{2k-1}}\sqrt{{\mathbb D}[\xi_{1}]}.
$$
\end{lemma}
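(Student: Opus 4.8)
The plan is to reduce the probabilistic claim to a deterministic inequality for the quantile function of $\xi_{1}$ and then apply the Cauchy--Schwarz inequality with a well-chosen centering constant. First I would set $\mu=\mathbb{E}[\xi_{1}]$ and note that replacing each $\xi_{i}$ by $\xi_{i}-\mu$ shifts both sides of the asserted inequality by $\mu$ and leaves $\mathbb{D}[\xi_{1}]$ unchanged, so it suffices to treat the case $\mu=0$. Let $F$ be the common distribution function of the $\xi_{i}$ and $F^{-1}(u)=\inf\{x:F(x)\ge u\}$ its generalised inverse. If $U_{1},\ldots,U_{k}$ are i.i.d.\ uniform on $(0,1)$, then each $F^{-1}(U_{i})$ has the law of $\xi_{i}$, and, since $F^{-1}$ is nondecreasing, $\sumo_{i=1}^{k}\xi_{i}$ has the law of $F^{-1}(V)$ with $V=\max_{1\le i\le k}U_{i}$, which has density $ku^{k-1}$ on $(0,1)$. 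Consequently
\[
\mathbb{E}\!\left[\sumo_{i=1}^{k}\xi_{i}\right]=\int_{0}^{1}F^{-1}(u)\,ku^{k-1}\,du,
\qquad
\int_{0}^{1}F^{-1}(u)\,du=\mathbb{E}[\xi_{1}]=0,
\qquad
\int_{0}^{1}F^{-1}(u)^{2}\,du=\mathbb{D}[\xi_{1}].
\]

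The key step is that, because $\int_{0}^{1}F^{-1}(u)\,du=0$, for every constant $c$ one may write $\mathbb{E}[\sumo_{i=1}^{k}\xi_{i}]=\int_{0}^{1}F^{-1}(u)\,(ku^{k-1}-c)\,du$, and then Cauchy--Schwarz gives
\[
\mathbb{E}\!\left[\sumo_{i=1}^{k}\xi_{i}\right]
\le
\left(\int_{0}^{1}F^{-1}(u)^{2}\,du\right)^{1/2}\left(\int_{0}^{1}(ku^{k-1}-c)^{2}\,du\right)^{1/2}
=\sqrt{\mathbb{D}[\xi_{1}]}\,\left(\int_{0}^{1}(ku^{k-1}-c)^{2}\,du\right)^{1/2}.
\]
I would then minimise the remaining integral over $c$: it is smallest for $c=\int_{0}^{1}ku^{k-1}\,du=1$, and using $\int_{0}^{1}u^{2k-2}\,du=1/(2k-1)$ one computes $\int_{0}^{1}(ku^{k-1}-1)^{2}\,du=k^{2}/(2k-1)-1=(k-1)^{2}/(2k-1)$. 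This yields $\mathbb{E}[\sumo_{i=1}^{k}\xi_{i}]\le\frac{k-1}{\sqrt{2k-1}}\sqrt{\mathbb{D}[\xi_{1}]}$, and restoring $\mu$ gives precisely the stated bound.

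The step I expect to be the crux is obtaining the sharp constant $\frac{k-1}{\sqrt{2k-1}}$ rather than a weaker one: applying Cauchy--Schwarz without centering (i.e.\ with $c=0$) gives only $\frac{k}{\sqrt{2k-1}}\sqrt{\mathbb{D}[\xi_{1}]}$, while centering the maximum at the sample mean $\bar\xi=k^{-1}\sum_{i}\xi_{i}$ and bounding crudely gives $\frac{k-1}{\sqrt{k}}\sqrt{\mathbb{D}[\xi_{1}]}$; both exceed the target for $k>1$. The improvement comes entirely from subtracting the optimal constant $c=1$ before invoking Cauchy--Schwarz, which is where the identically-distributed hypothesis enters, through the common quantile function. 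The only other thing to watch is the treatment of a general, possibly discontinuous $F$: working with the generalised inverse and the identity $\max_{i}F^{-1}(U_{i})=F^{-1}(\max_{i}U_{i})$ avoids any density assumption, and the hypothesis $\mathbb{D}[\xi_{1}]<\infty$ is exactly what guarantees that both integrals on the right-hand side of Cauchy--Schwarz are finite.
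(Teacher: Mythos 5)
Your proof is correct. The paper itself gives no proof of this lemma --- it is quoted as a known result with a citation to Gumbel (1954) and Hartley--David (1954) --- and the argument you give is precisely the classical one from those references: reduce to mean zero, write $\mathbb{E}\bigl[\max_i\xi_i\bigr]=\int_0^1 F^{-1}(u)\,k u^{k-1}\,du$ via the quantile transform and $\max_i F^{-1}(U_i)=F^{-1}(\max_i U_i)$, subtract the optimal constant $c=1$ using $\int_0^1 F^{-1}=0$, and apply Cauchy--Schwarz; the computation $\int_0^1(ku^{k-1}-1)^2\,du=(k-1)^2/(2k-1)$ checks out and yields the stated constant.
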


Consider a random matrix $ X $ with its entries $ x_{ij} $ taking
values in $ \underline{\mathbb R} $. We denote by $ {\mathbb E}[X] $ the
matrix obtained from $ X $ by replacing each entry $ x_{ij} $ by its
expected value $ {\mathbb E}[x_{ij}] $.

\begin{lemma}\label{L-EXE}
For any random matrix $ X $, it holds
$$
{\mathbb E}\normo{X}\geq\normo{{\mathbb E}[X]}.
$$
\end{lemma}

\begin{proof}
It follows from Lemma~\ref{L-POE} that
$$
{\mathbb E}\normo{X}
=
{\mathbb E}\left[\sumol_{i,j}x_{ij}\right]
\geq
\sumol_{i,j}{\mathbb E}[x_{ij}]
=
\normo{{\mathbb E}[X]}.
\qedhere
$$
\end{proof}

\section{Acyclic Fork-Join Networks}\label{S-AFJ}
We consider a network with $ n $ single-server nodes and customers of a
single class. The network topology is described by an oriented acyclic graph
with the set of vertices which represent the nodes of the network, and the set
of edges which determine the transition routes of customers. It is assumed
that there are vertices in the graph which have no either incoming or outgoing
edges. Each vertex with no predecessors is assumed to represent an infinite
external arrival stream of customers; provided that a vertex has no successors,
it is considered as an output node intended to release customers from the
network.

Each node of the network includes a server and infinite buffer which
together present a single-server queue operating under the first-come,
first-served queueing discipline. It is assumed that at the initial time, all
the servers are free of customers, the buffers in the nodes with no
predecessors have infinite number of customers, whereas the buffers in the
other nodes are empty.

In addition to the usual service procedure, some special join and fork
operations \cite{Bacc89} may be performed in a node respectively before and
after service of a customer. The join operation is actually thought to cause
each customer which comes into a node not to enter the buffer at the server
but to wait until at least one customer from every preceding node arrives. As
soon as these customers arrive, they, taken one from each preceding node, are
united to be treated as being one customer which then enters the buffer to
become a new member of the queue.

The fork operation at a node is initiated every time the service of a
customer is completed; it consists in giving rise to several new customers
one for each succeeding nodes. These customers simultaneously depart the node,
each being passed to separate node related to the first one. We assume that
the execution of fork-join operations when appropriate customers are
available, as well as the transition of customers within and between nodes
require no time.

An example of an acyclic fork-join network with $ n=5 $ is shown in
Fig.~\ref{F-AFJ}.
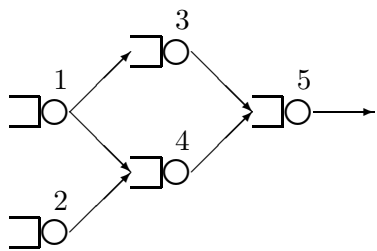
\begin{figure}[hhh]
\begin{center}
\setlength{\unitlength}{1mm}
\begin{picture}(50,35)
\newsavebox\queue
\savebox{\queue}(10,6){\thicklines
 \put(0,2){\line(1,0){4}}
 \put(0,-2){\line(1,0){4}}
 \put(4,2){\line(0,-1){4}}
 \put(6,0){\circle{3}}}

\put(0,16){\usebox\queue}
\put(5.5,22){$1$}
\put(8,19){\vector(1,1){8}}
\put(8,19){\vector(1,-1){8}}
\put(0,0){\usebox\queue}
\put(5.5,6){$2$}
\put(8,3){\vector(1,1){8}}

\put(16,24){\usebox\queue}
\put(21.5,30){$3$}
\put(24,27){\vector(1,-1){8}}
\put(16,8){\usebox\queue}
\put(21.5,14){$4$}
\put(24,11){\vector(1,1){8}}

\put(32,16){\usebox\queue}
\put(37.5,22){$5$}
\put(40,19){\vector(1,0){8}}

\end{picture}
\end{center}
\caption{An acyclic fork-join network.}\label{F-AFJ}
\end{figure}
Note that open tandem queueing systems (see Fig.~\ref{F-OTQ}) can be
considered as trivial fork-join networks in which no fork and join
operations are actually performed.
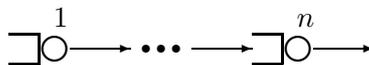
\begin{figure}[hhh]
\begin{center}
\setlength{\unitlength}{1mm}
\begin{picture}(50,10)
\savebox{\queue}(10,6){\thicklines
 \put(0,2){\line(1,0){4}}
 \put(0,-2){\line(1,0){4}}
 \put(4,2){\line(0,-1){4}}
 \put(6,0){\circle{3}}}

\put(0,0){\usebox\queue}
\put(5.5,6){$1$}
\put(8,3){\vector(1,0){8}}
\multiput(18,3)(2,0){3}{\circle*{1}}
\put(24,3){\vector(1,0){8}}
\put(32,0){\usebox\queue}
\put(37.5,6){$n$}
\put(40,3){\vector(1,0){8}}

\end{picture}
\end{center}
\caption{Open tandem queues.}\label{F-OTQ}
\end{figure}

For the queue at node $ i $, we denote the $k$th departure epoch by
$ x_{i}(k) $, and the service time of the $k$th customer by $ \tau_{ik} $.
We assume that $ \tau_{ik}\geq0 $ are given parameters for all
$ i=1,\ldots,n $, and $ k=1,2,\ldots $, while $ x_{i}(k) $ are
considered as unknown state variables. With the condition that the network
starts operating at time zero, it is convenient to set
$ x_{i}(0) = 0 $ for all $ i=1,\ldots,n $.

In order to describe the network dynamics, we apply the $(\max,+)$-algebra
representation derived in \cite{Kriv96a,Kriv96b}. For the network model under 
study, the representation takes the form of the state equation
\begin{equation}\label{E-DSE}
\mbox{\boldmath $x$}(k)=A(k)\otimes\mbox{\boldmath $x$}(k-1),
\end{equation}
with $ \mbox{\boldmath $x$}(k)=(x_{1}(k),\ldots,x_{n}(k))^{T} $ and
\begin{equation}\label{E-STM}
A(k) = (E\oplus\mathcal{T}_{k}\otimes G^{T})^{\otimes p}\otimes\mathcal{T}_{k},
\end{equation}
where $ \mathcal{T}_{k}=\mathop\mathrm{diag}(\tau_{1k},\ldots,\tau_{nk}) $, the matrix
$ G $ is the standard adjacency matrix of the network graph, and $ p $
is the length of the longest path in the graph.

We consider the evolution of the system as a sequence of service cycles:
the 1st cycle starts at the initial time, and it is terminated as soon as all
the servers in the network complete their 1st service, the 2nd cycle is
terminated as soon as the servers complete their 2nd service, and so on.
Clearly, the completion time of the $k$th cycle can be represented as
$$
\max_{i}x_{i}(k)=\normo{\mbox{\boldmath $x$}(k)}.
$$
With the condition $ \mbox{\boldmath $x$}(0)=\mbox{\boldmath $0$} $, we
have from (\ref{E-DSE})
$$
\normo{\mbox{\boldmath $x$}(k)}
=
\normo{A(k)\otimes\cdots\otimes A(1)}.
$$

The next lemma provides simple lower and upper bounds for the norm of the
matrix
$$
A_{k} = A(k)\otimes\cdots\otimes A(1).
$$
\begin{lemma}\label{L-TTT}
For all $ k=1,2,\ldots $, it holds
$$
\normo{\sum_{i=1}^{k}\mathcal{T}_{i}}
\le
\normo{A_{k}}
\le
\sum_{i=1}^{k}\normo{\mathcal{T}_{i}}
+ p\left(\sumo_{i=1}^{k}\normo{\mathcal{T}_{i}}\right).
$$
\end{lemma}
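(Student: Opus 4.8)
The plan is to bound $\normo{A_k}$ from both sides using the structure of $A(i)$ given in~(\ref{E-STM}), namely $A(i) = (E\oplus\mathcal{T}_{i}\otimes G^{T})^{\otimes p}\otimes\mathcal{T}_{i}$, together with Proposition~\ref{P-XXG}, Lemma~\ref{L-EAA}, and the norm properties collected in Section~\ref{S-ADR}.

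For the \emph{lower bound}, I would observe that $E\oplus\mathcal{T}_{i}\otimes G^{T}\ge E$ entrywise, and that $\mathcal{T}_i$ has the entry $\tau_{ji}$ in position $(j,j)$; since the product of matrices dominating $E$ dominates each factor, one gets $A(i)\ge\mathcal{T}_i$, hence $A_k = A(k)\otimes\cdots\otimes A(1)\ge\mathcal{T}_k\otimes\cdots\otimes\mathcal{T}_1$. Taking norms and using that the product of diagonal matrices $\mathcal{T}_k\otimes\cdots\otimes\mathcal{T}_1$ is the diagonal matrix $\mathop\mathrm{diag}\!\left(\sum_{i=1}^{k}\tau_{1i},\ldots,\sum_{i=1}^{k}\tau_{ni}\right)$ — which equals $\sum_{i=1}^{k}\mathcal{T}_{i}$ in the ordinary-sum sense used in the statement — gives $\normo{A_k}\ge\normo{\sum_{i=1}^{k}\mathcal{T}_{i}}$. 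Here I should double-check the precise meaning intended for $\sum_{i=1}^{k}\mathcal{T}_i$ on the left side; it is the ordinary matrix sum, whose norm is $\max_j\sum_i\tau_{ji}$, which is exactly $\normo{\mathcal{T}_k\otimes\cdots\otimes\mathcal{T}_1}$.

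For the \emph{upper bound}, I would split $A_k = \left(\bigotimes_{i=k}^{1}(E\oplus\mathcal{T}_{i}\otimes G^{T})^{\otimes p}\otimes\mathcal{T}_{i}\right)$ and push all the diagonal factors $\mathcal{T}_i$ to one side using Proposition~\ref{P-XXG}: since $\mathcal{T}_i\le\normo{\mathcal{T}_i}\otimes E$, each occurrence of $\mathcal{T}_i$ inside $(E\oplus\mathcal{T}_i\otimes G^T)^{\otimes p}$ and the trailing $\mathcal{T}_i$ can be bounded, yielding $A_k\le\left(\bigotimes_{i=1}^{k}\normo{\mathcal{T}_i}^{\otimes(p+1)}\right)\otimes\left(\bigotimes_{i=k}^{1}(E\oplus \normo{\mathcal{T}_i}\otimes G^{T})^{\otimes p}\right)$ — more carefully, I would first write $A(i)\le\normo{\mathcal{T}_i}\otimes(E\oplus\normo{\mathcal{T}_i}\otimes G^T)^{\otimes p}$ (absorbing the trailing $\mathcal{T}_i$), then take norms: $\normo{A_k}\le\sum_{i=1}^{k}\normo{\mathcal{T}_i}+\normo{\bigotimes_{i=k}^{1}(E\oplus\normo{\mathcal{T}_i}\otimes G^T)^{\otimes p}}$. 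The matrices $\normo{\mathcal{T}_i}\otimes G^T$ all share the acyclic associated graph of $G^T$ (longest path $p$) and have norm $\normo{\mathcal{T}_i}\ge 0$, so Lemma~\ref{L-EAA} applies and bounds that last norm by $\left(\bigoplus_{i=1}^{k}\normo{\normo{\mathcal{T}_i}\otimes G^T}\right)^{\otimes p} = \left(\bigoplus_{i=1}^{k}\normo{\mathcal{T}_i}\right)^{\otimes p} = p\left(\sumo_{i=1}^{k}\normo{\mathcal{T}_i}\right)$ in ordinary notation, giving the claimed right-hand side.

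The main obstacle I anticipate is the bookkeeping in the upper bound: correctly tracking how many factors of $\normo{\mathcal{T}_i}$ are produced when bounding $(E\oplus\mathcal{T}_i\otimes G^T)^{\otimes p}\otimes\mathcal{T}_i$, and verifying that after replacing $\mathcal{T}_i$ by $\normo{\mathcal{T}_i}\otimes E$ the scalar $\normo{\mathcal{T}_i}$ factors out cleanly (as a $(\max,+)$ scalar times a matrix) so that exactly the term $\sum_{i=1}^{k}\normo{\mathcal{T}_i}$ — and no extra multiplicity — appears outside, leaving precisely a product of $k$ matrices of the form $(E\oplus\normo{\mathcal{T}_i}\otimes G^T)^{\otimes p}$ to which Lemma~\ref{L-EAA} can be applied with the single exponent $p$. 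Once that reduction is done correctly, the rest is a direct application of the already-established algebraic results.
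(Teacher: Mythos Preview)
Your proposal is correct and follows essentially the same route as the paper: for the lower bound you use $A(i)\ge\mathcal{T}_i$ and the diagonality of the $\mathcal{T}_i$, and for the upper bound you bound the trailing $\mathcal{T}_i$ by $\normo{\mathcal{T}_i}\otimes E$ via Proposition~\ref{P-XXG}, pull out the scalars, and apply Lemma~\ref{L-EAA} to the remaining product. The only cosmetic difference is that the paper leaves $\mathcal{T}_i$ (rather than $\normo{\mathcal{T}_i}$) inside the factors $E\oplus\mathcal{T}_i\otimes G^T$ when invoking Lemma~\ref{L-EAA}, since those matrices already share the common acyclic graph of $G^T$; your extra replacement is harmless, and your ``more carefully'' version with $A(i)\le\normo{\mathcal{T}_i}\otimes(E\oplus\normo{\mathcal{T}_i}\otimes G^T)^{\otimes p}$ is the right one (discard the preceding line with $\normo{\mathcal{T}_i}^{\otimes(p+1)}$, which miscounts the factors).
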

\begin{proof}
To prove the left inequality note that
$$
\left(E\oplus\mathcal{T}_{i}\otimes G^{T}\right)^{\otimes p}\otimes\mathcal{T}_{i}
\geq
\mathcal{T}_{i}
$$
for each $ i=1,\ldots,k $, and so
$ A_{k}\geq\mathcal{T}_{k}\otimes\cdots\otimes\mathcal{T}_{1} $. Proceeding to the
norm, and considering that $ \mathcal{T}_{i} $, $ i=1,\ldots,k $, present
diagonal matrices, we get
\begin{eqnarray*}
\normo{A_{k}}
& \ge &
\normo{\mathcal{T}_{k}\otimes\cdots\otimes\mathcal{T}_{1}} \\
& = &
\normo{\mathcal{T}_{1}+\cdots+\mathcal{T}_{k}}.
\end{eqnarray*}

With Proposition~\ref{P-XXG} we have
$$
A_{k}
\le
\prodo_{i=1}^{k}\normo{\mathcal{T}_{i}}
\otimes
\prodo_{i=1}^{k}\left(E\oplus\mathcal{T}_{k-i+1}\otimes G^{T}\right)^{\otimes p}.
$$
By applying Lemma~\ref{L-EAA}, we finally obtain
\begin{eqnarray*}
\hskip 5mm
\normo{A_{k}}
& \le &
\prodo_{i=1}^{k}\normo{\mathcal{T}_{i}}\otimes
\left(\sumo_{i=1}^{k}\normo{\mathcal{T}_{i}}\right)^{\otimes p} \\
& = &
\sum_{i=1}^{k}\normo{\mathcal{T}_{i}}
+p\left(\sumo_{i=1}^{k}\normo{\mathcal{T}_{i}}\right).
\qedhere
\end{eqnarray*}
\end{proof}

In many applications, one is normally interested in investigating the
steady-state mean cycle time; that is, the limit of the ratio
$$
\frac{1}{k}\normo{\mbox{\boldmath $x$}(k)}
=\frac{1}{k}\normo{A_{k}}
$$
as $ k $ tends to $ \infty $. We will consider this problem with
relation to stochastic networks in the next section.

\section{Stochastic Networks}\label{S-SNS}
Suppose that for each node $ i=1,\ldots,n $, the service times
$ \tau_{i1},\tau_{i2},\ldots $, form a sequence of i.i.d. non-negative
random variables with $ {\mathbb E}[\tau_{ik}]<\infty $ and
$ {\mathbb D}[\tau_{ik}]<\infty $ for all $ k=1,2,\ldots $. With these
conditions, $ \mathcal{T}_{k} $ are i.i.d. random matrices, whereas
$ \normo{\mathcal{T}_{k}} $ present i.i.d. random variables with
$ {\mathbb E}\normo{\mathcal{T}_{k}}<\infty $ and
$ {\mathbb D}\normo{\mathcal{T}_{k}}<\infty $ for all $ k=1,2,\ldots $.

Furthermore, since the matrix $ A(k) $ depends only on $ \mathcal{T}_{k} $,
the matrices $ A(1),A(2),\ldots $, also present i.i.d. random matrices. It
is easy to verify using (\ref{E-STM}) that
$ 0\leq{\mathbb E}\normo{A(k)}<\infty $ for all $ k=1,2,\ldots $.

In the analysis of the mean cycle time of the system, one first has to
convince himself that the limit
$$
\lim_{k\to\infty}\frac{1}{k}\normo{A_{k}} = \gamma
$$
exists. As a standard tool to verify the existence of the above limit,
the next Subadditive Ergodic Theorem proposed in \cite{King73} is normally
applied. One can find examples of the implementation of the theorem in the
$(\max,+)$-algebra framework in \cite{Cohe88,Bacc93,Glas95}.
\begin{theorem}\label{T-KSE}
Let $ \{\xi_{lk} | \; l,k=1,2,\ldots; \, l<k \} $ be a family of random
variables which satisfy the following properties:

Subadditivity: $ \xi_{lk} \leq \xi_{lm} + \xi_{mk} $ for all
$ l<m<k $;

Stationarity: the joint distributions are the same for both families
$ \{\xi_{l+1 k+1} | \; l<k \} $ and $ \{\xi_{lk} | \; l<k \} $;

Boundedness: for all $ k=1,2,\ldots $, there exists
$ {\mathbb E}[\xi_{0k}] $, and $ {\mathbb E}[\xi_{0k}] \geq -ck $ for
some finite number $ c $.

Then there exists a constant $ \gamma $, such that it holds
\begin{enumerate}
\item ${\displaystyle \lim_{k \rightarrow \infty} \xi_{0k}/k = \gamma}$
\quad with probability 1,
\item ${\displaystyle \lim_{k \rightarrow \infty} {\mathbb E}[\xi_{0k}]/k
= \gamma}$.
\end{enumerate}
\end{theorem}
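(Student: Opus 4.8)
This is the classical subadditive ergodic theorem of Kingman, quoted here as a tool, and the plan is the standard two-stage argument: first settle the convergence of the expectations, then bootstrap to almost-sure convergence. For the first stage I would put $ g(k) = {\mathbb E}[\xi_{0k}] $ and observe that subadditivity together with stationarity gives $ g(l+k) = {\mathbb E}[\xi_{0,l+k}] \leq {\mathbb E}[\xi_{0l}] + {\mathbb E}[\xi_{l,l+k}] = g(l) + g(k) $, so that $ g $ is a subadditive numerical sequence. Fekete's lemma then yields that $ g(k)/k $ converges to $ \gamma := \inf_{k\geq1} g(k)/k $, and the boundedness hypothesis $ g(k) \geq -ck $ forces $ \gamma \geq -c > -\infty $. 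This already gives assertion~2.

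For the second stage, write $ \overline{\xi} = \limsup_{k\to\infty} \xi_{0k}/k $ and $ \underline{\xi} = \liminf_{k\to\infty} \xi_{0k}/k $, and proceed in three parts. (a) Using $ \xi_{0k} \leq \xi_{01} + \xi_{1k} $ and the fact that the measure-preserving shift carries $ \xi_{lk} $ to $ \xi_{l+1,k+1} $, show that $ \overline{\xi} $ and $ \underline{\xi} $ are shift-invariant up to null sets, hence measurable with respect to the invariant $ \sigma $-algebra $ \mathcal{I} $, and in particular have well-defined expectations. (b) For the upper bound, fix $ m\geq1 $ and, writing $ k = qm + r $ with $ 0 \leq r < m $, chop $ [0,k] $ into $ q $ consecutive blocks of length $ m $ plus a short remainder; iterated subadditivity gives $ \xi_{0k} \leq \sum_{j=0}^{q-1}\xi_{jm,(j+1)m} + \xi_{qm,k} $. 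The block summands form a stationary sequence, so Birkhoff's additive ergodic theorem yields $ \tfrac{1}{k}\sum_{j=0}^{q-1}\xi_{jm,(j+1)m} \to \tfrac{1}{m}{\mathbb E}[\xi_{0m}\mid\mathcal{I}] $ almost surely, while the remainder term vanishes in the limit; taking the infimum over $ m $ and using the first stage gives $ \overline{\xi} \leq \gamma $ almost surely. (c) For the lower bound, introduce the increments $ \eta_k = \xi_{0k} - \xi_{1k} $, note $ {\mathbb E}[\eta_k] = g(k) - g(k-1) $, truncate $ \eta_k $ from below by a constant $ -N $, apply the additive ergodic theorem to the truncated stationary sequence, and then let $ N \to \infty $ by monotone convergence to obtain $ {\mathbb E}[\underline{\xi}] \geq \gamma $. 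Combining, $ \gamma \leq {\mathbb E}[\underline{\xi}] \leq {\mathbb E}[\overline{\xi}] \leq \gamma $, so $ \underline{\xi} = \overline{\xi} = \gamma $ almost surely, which is assertion~1; a further dominated/monotone convergence argument reconciles this with assertion~2.

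The main obstacle is part~(c): controlling $ \liminf_{k}\xi_{0k}/k $ from below is the genuinely delicate step, since subadditivity only bounds $ \xi_{0k} $ from above, and one must pass through the truncated increments and carefully exchange limits to recover the constant $ \gamma $. Parts~(a) and~(b), by contrast, are essentially bookkeeping with subadditivity, stationarity, Fekete's lemma, and the classical (additive) ergodic theorem. In the setting of the present paper one could alternatively invoke the theorem as a black box from \cite{King73}, since only its conclusion — the existence of $ \gamma = \lim_{k\to\infty}\normo{A_k}/k $ — is used in what follows.
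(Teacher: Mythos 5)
The paper offers no proof of this statement at all: it is Kingman's subadditive ergodic theorem, imported verbatim from \cite{King73} and used strictly as a black box (the only thing the paper needs is the existence of the limit $\gamma$ for the family $\xi_{lk}=\normo{A(k)\otimes\cdots\otimes A(l)}$). Your closing remark therefore matches exactly what the author actually does, and your sketch is a reconstruction of the classical proof rather than an alternative to anything in the paper. As a reconstruction it is sound in outline: Fekete's lemma for assertion~2, shift-invariance of $\limsup$ and $\liminf$ via $\xi_{0k}\le\xi_{01}+\xi_{1k}$, and the block decomposition plus Birkhoff for the bound ${\mathbb E}[\overline{\xi}]\le\gamma$ are all standard and correct. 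Two caveats are worth recording. First, the hypotheses as stated are purely distributional, so before invoking Birkhoff you must transfer the whole family to a canonical sequence space on which the shift is genuinely measure-preserving; that is routine but not free. Second, your part~(c) is the one place where the sketch would not survive being written out as stated: the increments $\eta_k=\xi_{0k}-\xi_{1k}$ do not form a stationary sequence indexed in a way that Birkhoff's theorem applies to directly (stationarity moves \emph{both} indices, so the natural comparison is between $\xi_{1k}$ and $\xi_{0,k-1}$, not between successive $\eta_k$), and ``truncate and let $N\to\infty$ by monotone convergence'' hides the actual content of the hard direction. The complete arguments in the literature handle this either via Kingman's decomposition of the process into an additive part plus a nonnegative subadditive remainder, or via the Steele--Liggett covering argument for $\liminf\xi_{0k}/k$; one of these would have to replace your part~(c). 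Since the theorem is cited rather than proved in the paper, none of this affects the paper's logic, but if you intend your sketch as a proof it is incomplete precisely at the step you yourself flag as delicate.
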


In order to apply Theorem~\ref{T-KSE} to stochastic system (\ref{E-DSE})
with transition matrix (\ref{E-STM}), one can define the family of random
variables $ \{\xi_{lk} | \; l<k \} $ with
$$
\xi_{lk}=\normo{A(k)\otimes\cdots\otimes A(l)}.
$$

Since $ A(i) $, $ i=1,2,\ldots $, present i.i.d. random matrices, the
family $ \{\xi_{lk}\}_{l<k} $ satisfies the stationarity condition of
Theorem~\ref{T-KSE}. Furthermore, the multiplicative property of the norm
endows the family with subadditivity. The boundedness condition can be readily
verified using properties of the norm together with the condition that
$ 0\leq{\mathbb E}[\tau_{ik}]<\infty $ for all $ i=1,\ldots,n $, and
$ k=1,2,\ldots $.

Now we are in a position to present our main result which offers bounds on the
mean cycle time.
\begin{theorem}
In the stochastic dynamical system (\ref{E-DSE}) the mean cycle time
$ \gamma $ satisfies the double inequality
\begin{equation}\label{I-MCT}
\normo{{\mathbb E}[\mathcal{T}_{1}]} \leq \gamma \leq
{\mathbb E}\normo{\mathcal{T}_{1}}.
\end{equation}
\end{theorem}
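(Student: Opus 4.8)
The plan is to take expectations throughout the double inequality of Lemma~\ref{L-TTT}, divide by $k$, and let $k\to\infty$, using the identity $\gamma=\tfrac{1}{k}\lim_{k\to\infty}{\mathbb E}\normo{A_{k}}$ supplied by Theorem~\ref{T-KSE} for the family $\xi_{lk}=\normo{A(k)\otimes\cdots\otimes A(l)}$ introduced above.

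First I would treat the lower bound, starting from $\normo{A_{k}}\ge\normo{\sum_{i=1}^{k}\mathcal{T}_{i}}$. Since every $\mathcal{T}_{i}$ is diagonal, so is $\sum_{i=1}^{k}\mathcal{T}_{i}$; taking expectations and applying Lemma~\ref{L-EXE}, then ordinary linearity of expectation together with the identical distribution of the $\mathcal{T}_{i}$ and the homogeneity $\normo{cX}=c\normo{X}$ for $c>0$, I obtain
$$
{\mathbb E}\normo{A_{k}}
\ge
\normo{{\mathbb E}\Bigl[\textstyle\sum_{i=1}^{k}\mathcal{T}_{i}\Bigr]}
=
\normo{k\,{\mathbb E}[\mathcal{T}_{1}]}
=
k\,\normo{{\mathbb E}[\mathcal{T}_{1}]}.
$$
Dividing by $k$ and letting $k\to\infty$ yields $\gamma\ge\normo{{\mathbb E}[\mathcal{T}_{1}]}$.

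For the upper bound I would take expectations in $\normo{A_{k}}\le\sum_{i=1}^{k}\normo{\mathcal{T}_{i}}+p\bigl(\sumo_{i=1}^{k}\normo{\mathcal{T}_{i}}\bigr)$. The first summand has expectation $k\,{\mathbb E}\normo{\mathcal{T}_{1}}$ because the $\normo{\mathcal{T}_{i}}$ are i.i.d. The second summand, in ordinary notation, equals $p\max_{i}\normo{\mathcal{T}_{i}}$, i.e.\ $p$ times a maximum of $k$ i.i.d.\ random variables with finite mean and variance, so Lemma~\ref{L-EED} bounds its expectation by $p\bigl({\mathbb E}\normo{\mathcal{T}_{1}}+\tfrac{k-1}{\sqrt{2k-1}}\sqrt{{\mathbb D}\normo{\mathcal{T}_{1}}}\bigr)$. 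Hence
$$
\tfrac{1}{k}{\mathbb E}\normo{A_{k}}
\le
{\mathbb E}\normo{\mathcal{T}_{1}}
+\tfrac{p}{k}{\mathbb E}\normo{\mathcal{T}_{1}}
+\tfrac{p(k-1)}{k\sqrt{2k-1}}\sqrt{{\mathbb D}\normo{\mathcal{T}_{1}}},
$$
and letting $k\to\infty$ gives $\gamma\le{\mathbb E}\normo{\mathcal{T}_{1}}$.

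The only step requiring care is the second term of the upper bound: one must notice that the $(\max,+)$-power $\bigl(\sumo_{i=1}^{k}\normo{\mathcal{T}_{i}}\bigr)^{\otimes p}$ is nothing but $p$ times an ordinary maximum of i.i.d.\ variables, which makes the extreme-value estimate of Lemma~\ref{L-EED} applicable and shows this term is $O(\sqrt{k})$, hence negligible after division by $k$. Everything else is routine manipulation with Lemma~\ref{L-POE} and the additivity and homogeneity of the norm, together with the finiteness of ${\mathbb E}\normo{\mathcal{T}_{1}}$ and ${\mathbb D}\normo{\mathcal{T}_{1}}$ guaranteed by the standing assumptions.
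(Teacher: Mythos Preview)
Your proof is correct and follows essentially the same route as the paper's: take expectations in Lemma~\ref{L-TTT}, invoke Lemma~\ref{L-EXE} for the lower bound and Lemma~\ref{L-EED} for the upper, divide by $k$, and pass to the limit via Theorem~\ref{T-KSE}. The only blemish is the typo $\gamma=\tfrac{1}{k}\lim_{k\to\infty}{\mathbb E}\normo{A_{k}}$ in your opening line, where the $\tfrac{1}{k}$ should sit inside the limit; the subsequent argument applies the formula correctly.
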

\begin{proof}
Since Theorem~\ref{T-KSE} hods true, we may represent the mean cycle time as
$$
\gamma =
\lim_{k\rightarrow\infty}\frac{1}{k}{\mathbb E}\normo{A_{k}}.
$$

Let us first prove the left inequality in (\ref{I-MCT}). From
Lemmas~\ref{L-TTT} and \ref{L-EXE}, we have
$$
\frac{1}{k}{\mathbb E}\normo{A_{k}}
\geq
\frac{1}{k}{\mathbb E}\normo{\sum_{i=1}^{k}\mathcal{T}_{i}} \\
\geq
\normo{\frac{1}{k}\sum_{i=1}^{k}{\mathbb E}[\mathcal{T}_{i}]}
=
\normo{{\mathbb E}[\mathcal{T}_{1}]},
$$
independently of $ k $.

With the upper bound offered by Lemma~\ref{L-TTT}, we get
$$
\frac{1}{k}{\mathbb E}\normo{A_{k}}
\leq
{\mathbb E}\normo{\mathcal{T}_{1}}
+\frac{p}{k}{\mathbb E}\left[\sumo_{i=1}^{k}\normo{\mathcal{T}_{i}}\right].
$$
From Lemma~\ref{L-EED}, the second term on the right-hand side may be replaced
by that of the form
$$
\frac{p}{k}\left({\mathbb E}\normo{\mathcal{T}_{1}}
+\frac{k-1}{\sqrt{2k-1}}\sqrt{{\mathbb D}\normo{\mathcal{T}_{1}}}\right),
$$
which tends to $0$ as $ k\rightarrow\infty $.
\end{proof}

\section{Discussion and Examples}\label{S-DAE}
Now we discuss the behaviour of the bounds (\ref{I-MCT}) under various
assumptions concerning the service times in the network. First note that the
derivation of the bounds does not require the $k$th service times
$ \tau_{ik} $ to be independent for all $ i=1,\ldots,n $. As it is easy 
to see, if $ \tau_{ik}=\tau_{k} $ for all $ i $, we have
$ \normo{{\mathbb E}[\mathcal{T}_{1}]}={\mathbb E}\normo{\mathcal{T}_{1}} $, and
so the lower and upper bound coincide.

To show how the bounds vary with strengthening the dependency, we consider the
network with $ n=5 $ nodes, depicted in Fig.~\ref{F-AFJ}. Let
$ \tau_{i1}=\sum_{j=1}^{5}a_{ij}\xi_{j1} $, where $ \xi_{j1} $,
$ j=1,\ldots,5 $, are i.i.d. random variables with the exponential
distribution of mean $ 1 $, and
$$
a_{ij}
=\left\{
   \begin{array}{ll}
    a,                & \mbox{if $ i=j$}, \\
    \frac{1}{4}(1-a), & \mbox{if $ i\ne j$},
   \end{array}
 \right.
$$
where $ a $ is a number such that $ 1\leq a\leq 1/5 $.

It is easy to see that for $ a=1 $, one has $ \tau_{i1}=\xi_{i1} $, and
therefore, $ \tau_{i1} $, $ i=1,\ldots,5 $, present independent random
variables. As $ a $ decreases, the service times $ \tau_{i1} $ become
dependent, and with $ a=1/5 $, we will have
$ \tau_{i1}=(\xi_{11}+\cdots+\xi_{51})/5 $ for all $ i=1,\ldots,5 $.

The next table presents estimates of the mean cycle time
$ \widehat{\gamma} $ obtained via simulation after performing 100000
service cycles, together with the corresponding lower and upper bounds
calculated from (\ref{I-MCT}).
\begin{table}[hhh]
\begin{center}
\begin{tabular}{||c|c|c|c||}
\hline\hline
\strut & & & \\
$\; a \;$ &
$\normo{{\mathbb E}[\mathcal{T}_{1}]}$ &
$\widehat{\gamma}$ &
${\mathbb E}\normo{\mathcal{T}_{1}}$ \\
\strut & & & \\
\hline
1   & 1.0 & 1.005718 & 2.283333 \\
1/2 & 1.0 & 1.002080 & 1.481250 \\
1/3 & 1.0 & 1.000871 & 1.213889 \\
1/4 & 1.0 & 1.000279 & 1.080208 \\
1/5 & 1.0 & 1.000000 & 1.000000 \\
\hline\hline
\end{tabular}
\caption{Numerical results for dependent service times.}
\end{center}
\end{table}

Let us now consider the network in Fig.~\ref{F-AFJ} under the assumption that
the service times $ \tau_{i1} $ are independent exponentially distributed
random variables. We suppose that $ {\mathbb E}[\tau_{i1}]=1 $ for all
$ i $ except for one, say $ i=4 $, with $ {\mathbb E}[\tau_{41}] $
essentially greater than $ 1 $. One can see that the difference between the
upper and lower bounds will decrease as the value of
$ {\mathbb E}[\tau_{41}] $ increases. The next table shows how the bounds
vary with different values of $ {\mathbb E}[\tau_{41}] $.
\begin{table}[hhh]
\begin{center}
\begin{tabular}{||c|c|c|c||}
\hline\hline
\strut & & & \\
${\mathbb E}[\tau_{41}]$ &
$\normo{{\mathbb E}[\mathcal{T}_{1}]}$ &
$\widehat{\gamma}$ &
${\mathbb E}\normo{\mathcal{T}_{1}}$ \\
\strut & & & \\
\hline
 1.0 &  1.0 &  1.005718 &  2.283333 \\
 2.0 &  2.0 &  2.004857 &  2.896032 \\
 3.0 &  3.0 &  3.004242 &  3.685531 \\
 4.0 &  4.0 &  4.003627 &  4.554525 \\
 5.0 &  5.0 &  5.003013 &  5.465368 \\
 6.0 &  6.0 &  6.002398 &  6.400835 \\
 7.0 &  7.0 &  7.001783 &  7.351985 \\
 8.0 &  8.0 &  8.001168 &  8.313731 \\
 9.0 &  9.0 &  9.000553 &  9.282968 \\
10.0 & 10.0 & 10.000008 & 10.257692 \\
\hline\hline
\end{tabular}
\caption{Numerical results for a fork-join network.}
\end{center}
\end{table}

Finally, let us discuss the effect of decreasing the variance
$ {\mathbb D}[\tau_{i1}] $ on the bounds on $ \gamma $. Note that if
$ \tau_{i1} $ were degenerate random variables with zero variance, the
lower and upper bounds in (\ref{I-MCT}) would coincide. One can therefore
expect that with decreasing the variance of $ \tau_{i1} $, the accuracy of
the bounds increases.

As an illustration, consider a tandem queueing system (see Fig.~\ref{F-OTQ})
with $ n=5 $ nodes. Suppose that $ \tau_{i1}=\xi_{i1}/r $, where
$ \xi_{i1} $, $ i=1,\ldots,5 $, are i.i.d. random variables which have the
Erlang distribution with the probability density function
$$
f_{r}(t)
=\left\{
   \begin{array}{ll}
    t^{r-1}e^{-t}/(r-1)!, & \mbox{if $ t>0$}, \\
    0,                    & \mbox{if $ t\leq 0$}.
   \end{array}
 \right.
$$
Clearly, $ {\mathbb E}[\tau_{i1}]=1 $ and
$ {\mathbb D}[\tau_{i1}]=1/r $. Related numerical results including
estimates $ \widehat{\gamma} $ evaluated by simulating 100000 cycles, and
the bounds on $ \gamma $, are shown below.
\begin{table}[hhh]
\begin{center}
\begin{tabular}{||c|c|c|c||}
\hline\hline
\strut & & & \\
$\; r \;$ &
$\normo{{\mathbb E}[\mathcal{T}_{1}]}$ &
$\widehat{\gamma}$ &
${\mathbb E}\normo{\mathcal{T}_{1}}$ \\
\strut & & & \\
\hline
 1 & 1.0 & 1.042476 & 2.928968 \\
 2 & 1.0 & 1.026260 & 2.311479 \\
 3 & 1.0 & 1.019503 & 2.045538 \\
 4 & 1.0 & 1.015637 & 1.890824 \\
 5 & 1.0 & 1.013110 & 1.787242 \\
 6 & 1.0 & 1.010864 & 1.711943 \\
 7 & 1.0 & 1.009920 & 1.654154 \\
 8 & 1.0 & 1.008409 & 1.608064 \\
 9 & 1.0 & 1.007726 & 1.570232 \\
10 & 1.0 & 1.006657 & 1.538479 \\
\hline\hline
\end{tabular}
\caption{Numerical results for tandem queues.}
\end{center}
\end{table}

\subsection*{Acknowledgments}
The author would like to thank WODES'98 reviewers for their helpful comments
and suggestions.

\bibliographystyle{utphys}

\bibliography{Bounds_on_mean_cycle_time_in_acyclic_fork-join_queueing_networks}

\end{document}